\newtheorem{theorem}{Theorem} [section]
\newtheorem{lemma}[theorem]{Lemma}
\newtheorem{prop}[theorem]{Proposition}
\theoremstyle{definition}
\newtheorem{remark}[theorem]{Remark}
\numberwithin{equation}{section}
\renewcommand{\mod}[1]{{\ifmmode\text{\rm\ (mod~$#1$)}\else\discretionary{}{}{\hbox{ }}\rm(mod~$#1$)\fi}}
\newcommand{\rad}{{\rm rad}}
\newcommand{\PP}{{\mathcal P}}
\newcommand{\Nn}{{\mathcal N}}
\newcommand{\e}{{\rm e}}
\renewcommand{\S}{{\mathcal S}}
\newcommand{\T}{{\mathcal T}}
\newcommand{\Z}{{\mathbb Z}}
\newcommand{\buv}{\beta_{\{u,v\}}}
\newcommand{\starsum}{\sideset{}{^*}{\sum}}
\begin{document}

\title{The maximal density of product-free sets
in $\Z/n\Z$}

\author[P. Kurlberg]{P\"ar Kurlberg}
\address{Department of Mathematics\\
KTH\\
SE-10044, Stockholm, Sweden}
\email{kurlberg@math.kth.se}

\author[J. C. Lagarias] {Jeffrey C.  Lagarias}
\address{Department of Mathematics\\
University of Michigan\\
Ann Arbor, MI 48109, USA}
\email{lagarias@umich.edu}

\author[C. Pomerance]{Carl Pomerance}
\address{Mathematics Department\\
Dartmouth College\\
Hanover, NH 03755, USA}
\email{carl.pomerance@dartmouth.edu}

\subjclass[2000]{11B05, 11B75}

\date{January 10, 2012}
\maketitle

\begin{abstract}
  This paper studies the maximal size of product-free sets in
  $\Z/n\Z$.  These are sets of residues for which there is no solution
  to $ab\equiv c\pmod n$ with $a,b,c$ in the set.  
  In a previous paper we constructed an infinite sequence of integers
  $(n_{i})_{i\geq1}$ and product-free sets $\S_{i}$ in $\Z/n_{i}\Z$ such that
  the density $|\S_{i}|/n_{i}\to1$ as $i\to\infty$,
  where $|\S_{i}|$ denotes the cardinality of $\S_{i}$.
Here we obtain matching, up to constants, upper and lower  bounds on the 
maximal attainable density as $n \to \infty$.

\end{abstract}

%
%
\section{Introduction}

An important problem in combinatorial number
theory is the study of sets of integers with additive restrictions.
For example, a sum-free set $\S$ is one  forbidding solutions to
$a+b=c$  with $a,b, c \in \S$, and 
the condition of requiring  no solutions to $a+c= 2b$ gives sets $\S$ containing
no three-term arithmetic progression.
For sum-free sets it is easy to show that such sets have upper density at most $\frac{1}{2}$,
and the same holds for subsets of $\Z/n\Z$, and more generally for finite
abelian groups.
In fact, by the work of Green and Ruzsa \cite{GR05} (building on
partial results by Diananda and Yap \cite{DY69}),
the density attainable for any finite abelian group is known. 

Similarly, it is also natural to consider sets with multiplicative
restrictions.  For example, Behrend, Besicovitch, Erd\H os and others (see Hall~\cite{H96}) 
considered sets of integers with no member properly dividing another (known as {\em primitive} sets), 
and Erd\H os~\cite{E38} considered sets where no member divides the product of 
two other members.

Here we consider a multiplicative version of the sum-free problem.
We say  a set of integers $\S$ is {\em product-free} if whenever 
$a,b,c\in \S$ we have $ab\ne c$.  Similarly, if $\S\subset\Z/n\Z$, 
we say $\S$ is product-free if $ab\not\equiv c\pmod n$, 
whenever $a,b,c\in\S$.  Clearly, if $\S$ is a product-free
subset of $\Z/n\Z$, then the set of integers congruent modulo~$n$ to 
some member of $\S$ is a product-free set of integers.
For a product-free subset $\S$ of $\Z/n\Z$, let $D(\S)=|\S|/n$, where
$|\S|$ denotes the cardinality of $\S$.  Further,
let $D(n)$ denote the maximum of $D(\S)$ over all product-free sets $\S\subset\Z/n\Z$.

The problem of product-free sets in $\Z/n\Z$ was studied in a recent paper
by the third author and Schinzel \cite{PS11}.
One might initially think that this product-free problem has a similar
answer to the sum-free case, where the density can never exceed 
$\frac{1}{2}$.  In this direction, it was shown in \cite{PS11} that 
$D(n)<\frac{1}{2}$ holds 
for the vast majority of numbers $n$; specifically for all $n$ except possibly those
divisible by some $m^2$ where $m$ is the product of 6 distinct primes, 
and consequently the possible exceptional set has upper density smaller than
$1.56 \times 10^{-8}$. 
However, somewhat surprisingly, there are numbers $n$ for which 
$D(n)$ is arbitrarily close to~1;  in~\cite{KLP11} it was
shown that there are infinitely many $n$ such that 
\begin{equation}
  \label{eq:old-lower-bound}
D(n)>1-\frac{C}{(\log\log n)^{1-\frac12\e\log2}}
\end{equation}
for a suitable positive constant $C$. Here the exponent $1-
\frac{1}{2} \e \log 2 \approx 0.057915$.  Some key features of the sets
$\S$ of high density so constructed are that $n$ is highly composite,
divisible by the square of each of its prime factors,
and each member of such a set has a large common divisor with $n$.

Our aim in this paper is to get an exact form for the rate at which
$D(n)$ can approach $1$.
We begin with an upper bound that closely matches the lower bound
(\ref{eq:old-lower-bound}).
\begin{theorem}
\label{thm12}
There is a positive constant $c$ such that for all $n\ge20$,
$$
 D(n)<1-\frac{c}{(\log\log n)^{1-\frac{1}{2}\e\log2}\sqrt{\log\log\log n}}.
$$
\end{theorem}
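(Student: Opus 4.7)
My plan is to decompose $\Z/n\Z$ by the $\gcd$ with $n$ and exploit the multiplicative structure on the resulting fibres. Define $F_d = \{a \in \Z/n\Z : \gcd(a,n) = d\}$ for $d \mid n$ (a fibre of size $\phi(n/d)$). For a product-free $\S$ with $D(\S) = 1-\delta$, I set $\rho_d := |\S \cap F_d|/|F_d|$, so that $\delta = \sum_{d \mid n}(1-\rho_d)\phi(n/d)/n$.

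The first step is to establish a structural inequality among the $\rho_d$. For any $d_1, d_2 \mid n$, the multiplication map $F_{d_1} \times F_{d_2} \to F_{d_1 \star d_2}$, with $d_1 \star d_2 := \gcd(d_1 d_2, n)$, is a surjection, and by reducing to prime powers via the Chinese remainder theorem one checks that each point of the image $F_{d_1 \star d_2}$ has preimage of the same size $|F_{d_1}||F_{d_2}|/|F_{d_1 \star d_2}|$. A double-counting argument then gives
\[
\rho_{d_1}\rho_{d_2} + \rho_{d_1 \star d_2} \leq 1,
\]
because the product-free hypothesis forces $(\S \cap F_{d_1})(\S \cap F_{d_2}) \subseteq F_{d_1 \star d_2} \setminus (\S \cap F_{d_1 \star d_2})$. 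When $d$ is ``extreme''---meaning $v_p(d) \in \{0, v_p(n)\}$ for every prime $p \mid n$---one has $d \star d = d$ and $F_d$ is a group isomorphic to $(\Z/(n/d)\Z)^*$; the classical product-free bound in a finite group then sharpens the inequality to $\rho_d \leq 1/2$.

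Summing only over extreme $d$ yields $\delta \geq \tfrac12 \prod_{p \mid n}(1 - 1/p + p^{-v_p(n)}) \gg 1/\log\log n$, which is nontrivial but far weaker than the target. To reach the stated exponent I would apply the fundamental inequality to non-extreme divisors via a dichotomy: either $\rho_d$ is bounded away from $1$ (so $1-\rho_d$ contributes directly to $\delta$), or $\rho_d \approx 1$, in which case the inequality forces $\rho_{d \star d} \approx 0$ and $F_{d \star d}$ contributes essentially its full mass. Averaging this dichotomy over a carefully chosen family $\mathcal{D}$ of divisors of $n$, weighted by $\phi(n/d)/n$, should produce the desired lower bound on $\delta$.

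The main obstacle will be the choice of $\mathcal{D}$ and the quantitative aggregation needed to recover the specific exponent $1 - \tfrac12 \e \log 2$. This number equals the Sathe--Selberg/Erd\H os--Kac large-deviation rate $1 - \alpha + \alpha \log \alpha$ evaluated at $\alpha = \e/2$, which is the natural exponent governing integers (or divisors) whose number of prime factors is near $(\e/2) \log\log n$. I therefore expect $\mathcal{D}$ to consist of divisors of $n$ in that range---consistent with the construction of~\cite{KLP11}, which concentrates $\gcd(a,n)$ in a corresponding regime---and the $\sqrt{\log\log\log n}$ correction to appear as a Gaussian local-limit factor when summing $\phi(n/d)/n$ over $d$ with prescribed $\omega(d)$. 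Because the fundamental inequality is quadratic rather than linear in the $\rho_d$, the dichotomy threshold and the lattice averaging must be chosen carefully, and making the resulting estimates uniform in $n$ is where the bulk of the technical work will lie.
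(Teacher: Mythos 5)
Your opening decomposition by $\gcd(a,n)$ agrees with the paper's, and your multiplicative inequality is sound: the multiplication map $F_{d_1}\times F_{d_2}\to F_{d_1\star d_2}$ is surjective and equi-fibred (reduce to prime powers via CRT), so $\rho_{d_1}\rho_{d_2}+\rho_{d_1\star d_2}\le 1$ holds. This in fact strengthens the paper's Lemma~\ref{lem23}, which records only the linear consequence $\alpha_u+\alpha_v+\alpha_{uv}\le 2$ (and only when $uv\mid n$); the difference between the two is $(1-\rho_{d_1})(1-\rho_{d_2})\ge0$. You also correctly identify the Sathe--Selberg large-deviation rate as the source of the exponent $1-\tfrac12\mathrm{e}\log2$, and a local-limit factor as the source of $\sqrt{\log\log\log n}$.

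There are, however, two concrete gaps. The first is the aggregation mechanism, which your proposal defers to ``a carefully chosen family $\mathcal{D}$'' and ``averaging.'' The paper makes this precise by linear programming duality: the linear constraints $\alpha_u+\alpha_v+\alpha_{uv}\le2$ define a primal LP $(P_N)$ whose optimum bounds $D$, and a dual feasible solution is built by shifting mass from the trivial choice $\beta_u=1/u$ onto the variables $\beta_{\{u,v\}}$ supported on $u,v\le X$ with $\Omega(u)=\Omega(v)=k\approx\tfrac{\mathrm{e}}{4}\log\log X$, the gain then being read off Proposition~\ref{restateddual}. Your dichotomy idea has the right flavor but is not executed, and it hides a real difficulty: many $d$ collapse to the same $d\star d$, so one cannot simply sum the resulting ``$\rho_{d\star d}$ is small'' contributions to $\delta$ without double-counting; moreover, because your inequality is quadratic rather than linear, the LP framework that automatically handles this bookkeeping is unavailable. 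The second gap is the weight mismatch: $\delta$ is weighted by $\varphi(n/d)/n$, whereas the quantity that the combinatorics and the Sathe--Selberg estimates naturally control is weighted by $1/d$, and the relation $\tfrac1n\varphi(n/d)\ge\tfrac{\varphi(n)}{n}\cdot\tfrac1d$ goes the wrong way for an upper bound. The paper resolves this by replacing $n$ with the highly divisible auxiliary modulus $N(n)=\bigl(n\prod_{p\le X}p\bigr)^X$, $X=\lfloor\log n\rfloor$, using $D(n)\le D(N)$ (Theorem~\ref{th41a}); for $N$ the two weight systems are essentially comparable. Your plan never addresses this, and without some analogue of the auxiliary-modulus device the averaging step cannot be made uniform over all $n$ (think of $n$ with few divisors, where the lattice $\{d\mid n\}$ is too sparse for your family $\mathcal{D}$ to carry the needed mass).
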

\noindent
The restriction to $n\ge20$
is made here so  that the triple logarithm
is defined and positive.
Our second result is an improvement of the lower bound
(\ref{eq:old-lower-bound}) which shows that, up to  constants,
Theorem~\ref{thm12} is sharp.
\begin{theorem}
\label{thm11}
There is a positive constant $C$ and infinitely many integers $n$ with
$$
 D(n)>1-\frac{C}{(\log\log n)^{1-\frac{1}{2}\e\log2} \sqrt{\log\log\log n}}.
$$
\end{theorem}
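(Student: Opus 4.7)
The plan is to refine the construction from \cite{KLP11} to extract an additional factor of $\sqrt{\log\log\log n}$ through a local central limit theorem. We keep the overall recipe: take $n = \prod_{p \le y} p^{2}$ for a parameter $y$, and define $\S$ from the prime-divisor structure of residues modulo $n$. Here $\log n \asymp y$, so $\log\log n \asymp \log y$ and $\log\log\log n \asymp \log\log y$. The key algebraic fact driving the whole approach is that, because $n$ is squarefull, if $c \equiv ab \pmod n$ then $P(c) = P(a) \cup P(b)$, where $P(a) := \{p \mid n : p \mid a\}$; and moreover $v_p(c) \ge 2$ at every prime $p$ where $v_p(a), v_p(b) \ge 1$.

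The exponent $1 - \tfrac12 e \log 2$ arises as $Q(e/2)$, where $Q(\alpha) := \alpha \log \alpha - \alpha + 1$ is the Cram\'er rate function governing the nearly-Poisson random variable $|P(a)|$, whose mean is $\lambda := \sum_{p \le y} 1/p \asymp \log\log\log n$. Indeed, setting $B := \lfloor \tfrac{e}{2} \log\log y \rfloor$, a Sathe--Selberg-type local limit theorem gives
\[
  \Pr_{a \in \Z/n\Z}[\,|P(a)| = B\,] \;\asymp\; \frac{1}{(\log\log n)^{1 - \frac{e \log 2}{2}}\sqrt{\log\log\log n}},
\]
where the denominator $\sqrt{\log\log\log n}$ is the standard Gaussian normalization $1/\sqrt{2\pi \lambda}$. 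This is precisely the target density for the complement $T := (\Z/n\Z) \setminus \S$.

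The set $\S$ will be built as a disjoint union $\bigcup_{F \in \mathcal F} R_F^{\star}$, where $\mathcal F$ is a family of subsets of $\{p \le y\}$ with $|F|$ confined to a window of width $\asymp \sqrt{\log\log\log n}$ about $B$, and $R_F^{\star}$ is a subset of $\{a \pmod n : P(a) = F\}$ cut out by a restriction on the $v_p$-valuations for $p \in F$. Product-freeness is forced by two complementary mechanisms: (i) if $F_1 \cup F_2 \notin \mathcal F$ (e.g.\ because $|F_1 \cup F_2|$ falls outside the window), then $ab \bmod n \notin \S$; (ii) if instead $F_1 \cap F_2 \ne \emptyset$, then at any $p \in F_1 \cap F_2$ we have $v_p(ab \bmod n) = 2$, so $ab \bmod n$ lies in a $v_p$-signature class excluded from $R_{F_1 \cup F_2}^{\star}$. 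Summing $|R_F^{\star}|/n$ over $F \in \mathcal F$ and comparing with $\Pr[\,|P(a)|$ in the window$\,]$ yields the claimed density.

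The main obstacle is to choose $\mathcal F$ and the window width so as to simultaneously (a) capture the extra $\sqrt{\log\log\log n}$-worth of probability mass over what a single level set $\{|F| = B\}$ would give, (b) respect the union/valuation mechanism for product-freeness, and (c) keep the error in the Sathe--Selberg asymptotic uniform over the relevant range of $B$. The window width must match $\sqrt{\log\log\log n}$, the standard deviation of $|P(a)|$, so the local central limit theorem just barely supports the optimization; pushing it any wider would destroy the combinatorial closure property, while pushing it narrower would forfeit the targeted improvement. Establishing the Sathe--Selberg estimate uniformly across an $O(\sqrt{\log\log\log n})$ range of $B$, and verifying that the $v_p$-signature restrictions preserve an essentially full fraction of the mass inside each $R_F^{\star}$, form the technical core of the argument.
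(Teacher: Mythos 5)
Your intuition about \emph{where} the $\sqrt{\log\log\log n}$ comes from — a Stirling/local-CLT normalization of a nearly Poisson statistic — is correct, and the paper realizes exactly this via Lemma~\ref{lem-recipsum} (the sharp two-sided bound $\sum_{m\in\NN_\PP,\,\Omega(m)=j}1/m\asymp s^j/j!$) combined with Lemma~\ref{lem-estimates} ($\binom{2k}{k}\asymp4^k/\sqrt k$). But the construction you describe cannot produce a set of density near~$1$, and this is not a matter of tuning constants. You confine $|P(a)|$ to a window of width $\asymp\sqrt{\log\log\log n}$ about $B\approx\tfrac{\e}{2}\log\log y$, whereas $|P(a)|$ over $a\in\Z/n\Z$ has mean $\lambda=\sum_{p\le y}1/p\approx\log\log y\approx\tfrac{2}{\e}B$, well below $B$, with standard deviation $\asymp\sqrt\lambda$. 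The probability that $|P(a)|$ lands in your window is therefore $\asymp(\log\log n)^{-(1-\frac{\e}{2}\log2)}/\sqrt{\log\log\log n}\to0$ — the very quantity you compute. Since $\S=\bigcup_{F\in\mathcal F}R_F^\star$ lies inside this event, $D(\S)\to0$, the opposite of what is required; you have placed $\S$ where the \emph{complement} should go. Inverting (letting $\S$ be the complement of a narrow window) also fails: two units $a,b$ and the unit $ab$ would all lie in $\S$, violating product-freeness.

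The paper works instead with a \emph{wide} interval. With $n_x=\ell_x^2$ and $k=\lfloor\tfrac{\e}{4}\log\log x\rfloor$, the product-free set from \cite{KLP11} consists, roughly, of residues with $\Omega(\gcd(a,\ell_x))$ in $(k,2k)$; this interval has length $\asymp\log\log\log n_x$, not $\asymp\sqrt{\log\log\log n_x}$, and it straddles the mean $s\approx\log\log x$, so it captures nearly all the mass. The complement is bounded by $\sum_{P(d)\le x,\ \Omega(d)\notin(k,2k)}1/d$, and the single-level estimate $s^k/k!\asymp4^k/\sqrt k$ — the Stirling correction you intuited — is applied to the two \emph{tails} of that complement sum, not to a narrow band used to define $\S$. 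Note as well that the relevant statistic is $\Omega$, counted with multiplicity: for $n_x=\ell_x^2$ one has $\Omega(\gcd(ab,n_x))\ge\Omega(\gcd(a,\ell_x))+\Omega(\gcd(b,\ell_x))$, which is what pushes products past $2k$. Your proposed union-of-prime-sets mechanism with $v_p$-signature restrictions is a genuinely different device, and even setting aside the density gap above, you have not accounted for the additional density loss those valuation restrictions would impose on each $R_F^\star$.
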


Before proceeding, we give a brief outline of the proof of
our principal result, Theorem 
\ref{thm12}.  
To bound the maximum density from above, we introduce certain linear
programming (LP) problems $(P_n)$.
The variables of  $(P_n)$
are $\{\alpha_u\}$ with  $u$ ranging over the divisors of $n$ exceeding~1,
with objective function $\sum\alpha_u/u$.
Given a product-free set $\S$, the values
$$\alpha_{u}=|\{ a \in \S : (a,n)=u\}|/|\{ a \mod n : (a,n)=u\}|,$$
for $u>1$ give a feasible solution to $(P_n)$.
There is a mismatch between the objective function 
and $D(\S)$,
and to get around this we  associate to each $n$ a larger auxiliary
number $N= N(n)$ 
which $n$ divides (so that $D(n)\le D(N)$), such that the  optimal
solution value of the linear program $(P_N)$ 
can be used to give an upper bound on $D(N)$
(Theorem~\ref{th41a}).
To bound the new optimal solution value,  we switch to the dual linear program $(D_N)$,
for which each feasible solution gives an upper bound on the
optimal value of $(P_N)$.  A mechanism for finding a good feasible solution
to the dual LP is the heart of the proof given in Section \ref{sec5}.

There remains the problem of  obtaining  tight optimal constants  in
these theorems. 
 With some effort, numerical values for
$c$ and $C$ in Theorems~\ref{thm12} and~\ref{thm11} are computable.
However the  linear program used to prove Theorem~\ref{thm12} relaxes the conditions of the problem
and loses some information, and it is perhaps unlikely that the constants $c$ and $C$ so obtained will
asymptotically match.

The proof of Theorem~\ref{thm12} is given in Sections
\ref{sec2}-\ref{sec5}.  In Section \ref{sec6}, 
we prove Theorem~\ref{thm11} by refining the method of
\cite{KLP11}.\medskip

\paragraph{\bf Notation.} For  $n$ a positive integer, $\varphi(n)=|(\Z/n\Z)^{*}|$ denotes Euler's  function at $n$,
$\omega(n)$ denotes the number of distinct prime factors of $n$, $\Omega(n)$ denotes the
total number of prime factors of $n$ counted with multiplicity, $\sigma(n)$ denotes
the sum of the positive divisors of $n$, and $\rad(n)$ denotes the largest squarefree divisor
of $n$.  We write $d\| n$ if $d\mid n$ and $\gcd(d,n/d)=1$.
We use the notation $A(x)\ll B(x)$ if $A(x)=O(B(x))$.  This relation is uniform in other variables
unless indicated by a subscript.  We write $A(x)\asymp B(x)$ if $A(x)\ll B(x)\ll A(x)$.  Finally,
we always use the letter $p$ to denote a prime variable.

%
%
\section{Preliminaries: Properties of the Density Function}\label{sec2}

As noted in \cite{KLP11}, we have the following simple
result.


\begin{lemma}\label{lem21}
For all integers $m,n \ge 1$,
\begin{equation}\label{211}
D(n) \le D(mn).
\end{equation}
\end{lemma}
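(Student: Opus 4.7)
The plan is to prove the inequality by lifting any product-free set $\S \subset \Z/n\Z$ to a product-free set $\S' \subset \Z/mn\Z$ of the same density, via pullback along the natural reduction map.

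More precisely, let $\pi \colon \Z/mn\Z \to \Z/n\Z$ be the canonical surjection. Given a product-free set $\S \subset \Z/n\Z$, I would define
\[
\S' := \pi^{-1}(\S) = \{\, a \in \Z/mn\Z : a \bmod n \in \S \,\}.
\]
Since $\pi$ is an $m$-to-one surjective ring homomorphism, we have $|\S'| = m|\S|$, so that
\[
\frac{|\S'|}{mn} = \frac{m|\S|}{mn} = \frac{|\S|}{n} = D(\S).
\]

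The only thing to verify is that $\S'$ is product-free in $\Z/mn\Z$. Suppose to the contrary that there exist $a,b,c \in \S'$ with $ab \equiv c \pmod{mn}$. Reducing modulo $n$ and using that $\pi$ is a ring homomorphism, we get $\pi(a)\pi(b) \equiv \pi(c) \pmod{n}$, with $\pi(a),\pi(b),\pi(c) \in \S$, contradicting the hypothesis that $\S$ is product-free modulo $n$.

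Taking the supremum over all product-free $\S \subset \Z/n\Z$, we conclude $D(mn) \ge D(n)$, which is \eqref{211}. There is no serious obstacle here; the argument is entirely formal and relies only on the fact that reduction modulo $n$ is a ring homomorphism, so product-freeness pulls back through $\pi$.
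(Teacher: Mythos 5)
Your proof is correct and is the same argument as the paper's: your $\pi^{-1}(\S)$ is exactly the paper's set $\tilde\S = \S + \{0,n,2n,\dots,(m-1)n\}$, and both verify the cardinality and product-free properties in the same way. The only difference is cosmetic --- you phrase it as pullback along the reduction homomorphism, whereas the paper writes the preimage set explicitly.
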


\begin{proof}
Given a product-free set $\S~ (\bmod~n)$, the set 
$\tilde{\S} := \S +\{ 0, n, 2n,..., (m-1)n\} \subset \Z/mn\Z$ 
has $|\tilde{\S} |= m|\tilde{\S}|$.
Now $\tilde{\S}$ is product-free $(\bmod ~mn)$ since any product of elements in $\tilde{\S}$
falls
in a congruence class $(\bmod ~n)$ that is not in $\S$.
\end{proof}

For a positive integer $n$ and a divisor $u$ of $n$, we let
$$\T_u:= \{a \in \Z/n\Z: ~~\gcd (a, n)= u\}.$$
Clearly
\begin{equation}\label{eq225}
|\T_u| = \varphi\left(\frac{n}{u}\right).
\end{equation}
Given some subset $\S$ of $\Z/n\Z$, we let
$$
\S_u := \{ a \in \S: ~\gcd(a, n)=u\}=\S\cap\T_u. 
$$
It is natural to measure the size of $\S_u$ with respect to $\T_u$.

The following result is implicit in \cite{PS11};
since it is central to our argument, we give complete details.

\begin{lemma}~\label{lem23} 
For any product-free set $\S$ $(\bmod ~n)$ and $u\mid n$, let 
$$
\alpha_u=\alpha_u(\S):=\frac{|\S_u|}{|\T_u|}=\frac{|\S_u|}{\varphi(n/u)}.
$$
Then, for all $v\mid n$ such that $uv\mid n$, we have
\begin{equation}\label{eq231}
0 \le \alpha_u \le 1
\end{equation}
and
\begin{equation}\label{eq232}
\alpha_u + \alpha_v + \alpha_{uv} \le 2
\end{equation}
\end{lemma}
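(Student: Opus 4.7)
The first bound $0 \le \alpha_u \le 1$ is immediate from $\S_u \subseteq \T_u$, so my focus will be on \eqref{eq232}. My plan is to exploit the product-free condition by multiplication: I will fix an element $b \in \S_v$, define the map $\mu_b : \T_u \to \Z/n\Z$ by $a \mapsto ab \bmod n$, and argue that $\mu_b(\S_u) \cap \S_{uv} = \emptyset$. If I can show that $\mu_b$ lands in $\T_{uv}$ and covers it uniformly, a sharp incidence count will yield the bound.

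First I will check that when $uv \mid n$, the map $\mu_b$ really takes $\T_u$ into $\T_{uv}$. Writing $a = ua_0$ and $b = vb_0$ with $\gcd(a_0, n/u) = \gcd(b_0, n/v) = 1$, this reduces to a prime-by-prime $p$-adic valuation calculation; one splits into cases according to which of $u$, $v$, $n/(uv)$ is divisible by $p$, and verifies in each case that $v_p(\gcd(ab, n)) = v_p(uv)$. This step is routine bookkeeping.

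Next I will analyze the fibers of $\mu_b$. Under the natural bijections $\T_u \leftrightarrow (\Z/(n/u)\Z)^*$ and $\T_{uv} \leftrightarrow (\Z/(n/(uv))\Z)^*$ given by $a \leftrightarrow a_0$, the map $\mu_b$ becomes $a_0 \mapsto a_0 b_0 \bmod n/(uv)$, i.e., the canonical reduction $(\Z/(n/u)\Z)^* \to (\Z/(n/(uv))\Z)^*$ followed by multiplication by the unit $b_0$. A brief CRT lifting argument shows that the reduction between unit groups is surjective; being a surjective homomorphism of finite abelian groups, its fibers all have the common cardinality $\varphi(n/u)/\varphi(n/(uv)) = |\T_u|/|\T_{uv}|$, and composing with the bijection ``multiplication by $b_0$'' preserves this structure.

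Finally I combine the pieces: the product-free hypothesis yields $\mu_b(\S_u) \cap \S_{uv} = \emptyset$, hence $|\mu_b(\S_u)| + |\S_{uv}| \le |\T_{uv}|$; the uniform fiber bound gives $|\mu_b(\S_u)| \ge |\S_u|\,|\T_{uv}|/|\T_u|$, and dividing through by $|\T_{uv}|$ delivers the sharper inequality $\alpha_u + \alpha_{uv} \le 1$, so that $\alpha_u + \alpha_v + \alpha_{uv} \le 1 + \alpha_v \le 2$. In the degenerate case $\S_v = \emptyset$ one has $\alpha_v = 0$ and \eqref{eq232} follows at once from $\alpha_u, \alpha_{uv} \le 1$. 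The main (mild) obstacle in the argument is the bookkeeping that establishes the uniform-fiber property of $\mu_b$; once that is in place, the product-free hypothesis delivers \eqref{eq232} essentially for free.
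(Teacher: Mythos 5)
Your proposal is correct and takes essentially the same approach as the paper: both fix a single element of $\S$ and use the fact that multiplication by it carries one gcd-class $\T_w$ onto another with uniform fibers, then count to get a two-term bound $\alpha_\cdot + \alpha_{uv} \le 1$. The only cosmetic difference is that you fix $b \in \S_v$ and map $\T_u \to \T_{uv}$ (yielding $\alpha_u + \alpha_{uv} \le 1$), whereas the paper fixes $a \in \S_u$ and maps $\T_v \to \T_{uv}$ (yielding $\alpha_v + \alpha_{uv} \le 1$); since $u$ and $v$ play symmetric roles, these are the same argument.
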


\begin{proof}
Here \eqref{eq231} is immediate, holding in fact
for any set $\S \subset \Z/n\Z$,
whether or not it is product-free. 
If $\alpha_u=0$, then \eqref{eq232} immediately follows from
\eqref{eq231} applied to $v$ and $uv$, so we may assume that
$\alpha_u>0$.  Let $a\in\S_u$.  In the ring $\Z/n\Z$, multiplication
by $a$ takes $\T_v$ onto $\T_{uv}$, where each member of $\T_{uv}$
has the same size pre-image in $\T_v$, namely 
$|\T_v|/|\T_{uv}|= \varphi(n/v)/\varphi(n/uv)=k$,
say.  Since $\S$ is product-free, each $b\in\S_{uv}$ is thus associated
with $k$ members of $\T_v$ that cannot lie in $\S_v$.  Thus,
$k|\S_{uv}|+|\S_v|\le|\T_v|=\varphi(n/v)$.  Dividing this inequality
by $\varphi(n/v)$ and using the definition of $k$ gives
$$
\frac{|\S_{uv}|}{\varphi(n/uv)}+\frac{|\S_v|}{\varphi(n/v)}\le1,
$$
which with \eqref{eq231} proves \eqref{eq232}.
\end{proof}

Finally we recall (from~\cite{PS11}) a fact about product-free sets $\S$.

\begin{lemma}\label{lem22}
 Given $n$, if $\S$ is product-free $(\bmod ~n)$ and $a \in \S$
has $\gcd(a, n) =1$, then
$$
D(\S) < \frac{1}{2}.
$$
Thus, if $D(\S)\ge \frac{1}{2}$ then $\alpha_1(\S)=0$.
\end{lemma}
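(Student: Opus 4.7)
The plan is to exploit the fact that when $a \in \S$ is coprime to $n$, multiplication by $a$ is a bijection of $\Z/n\Z$, so the translate $a\S := \{ab \bmod n : b \in \S\}$ has the same cardinality as $\S$. The product-free hypothesis then forces $a\S$ and $\S$ to be disjoint, giving $2|\S| \le n$, and a small extra observation about the element $0$ will upgrade this to the strict inequality $D(\S) < 1/2$.

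More precisely, I would proceed as follows. First, since $\gcd(a,n) = 1$, the map $x \mapsto ax$ is a permutation of $\Z/n\Z$, so $|a\S| = |\S|$. Second, for any $b \in \S$ the product-free condition gives $ab \not\equiv c \pmod{n}$ for any $c \in \S$, i.e.\ $ab \notin \S$; hence $a\S \cap \S = \emptyset$. Third, $0 \notin \S$, because otherwise $0 \cdot 0 = 0$ would violate the product-free property, and consequently $0 \notin a\S$ as well (since $a$ is invertible mod $n$). Therefore $\S \cup a\S$ is a disjoint union contained in $(\Z/n\Z) \setminus \{0\}$, which yields
\[
2|\S| = |\S| + |a\S| = |\S \cup a\S| \le n - 1,
\]
and dividing by $n$ gives $D(\S) \le (n-1)/(2n) < 1/2$, as desired.

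The second assertion of the lemma is just the contrapositive of the first applied to the definition of $\alpha_1$. Indeed, $\alpha_1(\S) = |\S_1|/\varphi(n)$ where $\S_1 = \{a \in \S : \gcd(a,n) = 1\}$, so $\alpha_1(\S) > 0$ is precisely the statement that $\S$ contains some element coprime to $n$. By the first part this forces $D(\S) < 1/2$, and so $D(\S) \ge 1/2$ implies $\alpha_1(\S) = 0$.

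There is no real obstacle here: the key step is recognizing that invertibility of $a$ upgrades the product-free constraint into a set-theoretic disjointness $\S \cap a\S = \emptyset$, after which counting is immediate. The only mildly delicate point is ensuring the inequality is strict; this is handled by noting $0$ lies in neither $\S$ nor $a\S$, which strips one element from the ambient space of size $n$.
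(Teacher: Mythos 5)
Your proof is correct and is essentially the same argument as the paper's: multiply $\S$ by the coprime element $a$ to get a disjoint translate of equal size inside $(\Z/n\Z)\setminus\{0\}$, yielding $2|\S|\le n-1$. The only difference is cosmetic — you spell out why $0\notin\S$ (since $0\cdot 0 = 0$ would violate product-freeness), whereas the paper simply asserts it; otherwise the two proofs coincide.
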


\begin{proof}
We may assume $0\not\in\S$.
 Suppose $a \in \S$ with $\gcd(a, n)=1$. By the product-free property we have 
$a\S \cap \S = \emptyset.$ Now the gcd condition gives $|a\S| = |\S|$, whence
$ |\S| + |a\S| = 2|\S| \le n-1$
gives the result.
\end{proof}

This simple result already yields an upper bound for $D(n)$: one has, for all $n\ge8$, 
\begin{equation}
\label{eq-simple}
D(n)\le1-\frac1{3\log\log n}.
\end{equation}
To see this, if $\S$ is product-free (mod~$n$) and $D(\S)\ge\frac12$, then
the lemma shows that the set contains no $a$ with $(a, n)=1$,
whence $D(\S)\le1-\varphi(n)/n$.  The upper bound ~\eqref{eq-simple}
then follows from estimates of Rosser and Schoenfeld~\cite[Theorem 15]{RS62} valid for all
 $n \ge \e^{\e^2}.$
For $n$ with $8 \le n\le \e^{\e^2}$, we have from~\cite{PS11} that $D(n)<\frac12$,
which is stronger than~\eqref{eq-simple}.  
However, establishing the 
upper bound of Theorem~\ref{thm12}  is
more delicate.


\section{Linear Programs and Linear Programming Duality}\label{sec3}

In this section, for each fixed positive integer $n$, we formulate a linear program
$(P_n)$, along with its associated dual linear program $(D_n)$ which encodes
product-free conditions given in Section \ref{sec2};  related linear programs 
were already suggested in \cite[Question 3]{PS11} as an approach to upper bounds.
We
term $(P_n)$ a
primal linear program and $(D_n)$ its dual linear program, because
$(P_n)$ is given in a  standard inequality form called in the literature
{\em primal form} (alternatively, {\em canonical form}),
and $(D_n)$ takes the standard dual form 
as given  in  Schrijver \cite[eqn. (19), p. 91]{Sch86}, for example.

To label the variables in the  primal linear program $(P_n)$,  we let $u, v$ represent 
divisors of $n$ which are larger than $1$, and
we let  $\{u, v\}$ denote an unordered pair
of divisors with both $u, v >1$ and $uv\mid n$;  we permit the equality $u=v$
if $u^2 \mid n$.  The  linear program $(P_n)$ is as follows.


$$
\begin{array}{lcccl}
\mbox{\bf Primal LP}:~ { (P_n)}~~~~ &~&~  &\\
~&~&~&~&\\
 \mbox{MAXIMIZE}  & ~&\ell_P(\alpha) &= &
\sum_{u\mid n, u>1} \quad \frac{1}{u}\, \alpha_u\\
~~~~~subject ~~to~~
&~& ~&~&~\\
\mbox{nonnegativity constraints}: &~& \alpha_u & \ge & 0 \\
~~~~~and&~&~&~\\
\mbox{nontrivial constraints} ~C(\beta_u):  &~& \alpha_u &\le & 1\\
\mbox{nontrivial constraints}~C( \beta_{\{u, v\}}):& ~& \alpha_u + \alpha_v + \alpha_{uv} & \le & 2
\end{array}
$$
This linear program has $\delta_1(n)$ variables $\alpha_u$, where $\delta_1(n)$ denotes
the number of divisors of $n$ that exceed 1.
These are the variables which appear in the linear {\em objective function} 
$\ell_P(\alpha)$, where $\alpha$ denotes the vector of variables 
$\alpha= (\alpha_u)_{u\mid n, u>1}.$
We refer to the nonnegativity constraints as {\em trivial} constraints  and call all the 
other constraints  {\em nontrivial}.
The nontrivial constraints of this linear program are named after the variables
$\beta_u$ and $\beta_{\{u, v\}}$  that occur in the dual linear
 program $(D_n)$ described below. 
There are $\delta_1(n)+\delta_2(n)$ nontrivial constraints, where
 $\delta_2(n)$ counts the number of unordered pairs $\{u, v\}$ with $u,v>1$ and
$uv \mid n.$

 We let $L_{P}^{opt}(n)$ denote the optimal objective function 
of this linear program, which is the maximum possible value given the constraints, 
explicitly noting its dependence on $n$.
We note that 
Lemma~\ref{lem23} shows that the values of  $\alpha_u(\S)$ with $u>1$
for any product-free set $\S$ $(\bmod~n)$ give a feasible solution to $(P_n)$.

To a primal linear program $(P_n)$
 there is a canonically associated {\em dual linear program} $(D_n)$.
 To label the dual variables, 
 we let $u, v, w$ represent 
divisors of $n$ which are larger than $1$. Some dual variables are labeled by  unordered pairs
of divisors
e.g.\ $\{u, v\}$, and in this case we  require $uv\mid n$, and again we allow $u=v$ when
$u^2\mid n$.
The dual linear program  $(D_n)$ 
is  as follows.


$$
\begin{array}{lcccl}
\mbox{\bf Dual LP:} ~{ (D_n)}&~&~&~& ~ \\
~&~&~&~&\\
 \mbox{MINIMIZE}  &~&~~~~~ \ell_D(\beta)~~~\, =\, 
\sum_{u\mid n, u>1} \beta_u
+2\sum_{\{u,v\},\,uv \mid n, \, u, v > 1} \beta_{\{ u, v\}} &&\\
~~~~~subject ~~to & ~&~&~\\

\mbox{nonnegativity constraints}: &~& \beta_u & \ge & 0 \\
\mbox{nonnegativity constraints}: &~& \beta_{\{u, v\}} & \ge & 0 \\
~~~~~ and &~&~&~&~\\
\mbox{nontrivial constraints} ~C(\alpha_u):  &~& 
\beta_u + \sum_{\{v,w\},\,vw=u} \beta_{\{v, w\}} +
\sum^*_{v,\,uv\mid n} \beta_{\{u, v\}} & \ge&  \frac{1}{u}.
\end{array}
$$
The asterisk in $\sum^*$ signifies that the summand $\beta_{\{u,v\}}$ is
counted twice in the case that $v=u$.  (This corresponds to the primal LP 
constraint $C(\buv)$ taking the form $2 \alpha_{u} + \alpha_{uv} \leq
2$ when $u=v$.)

The nontrivial constraints $C(\alpha_u)$ in this linear program are named after  the
variables $\alpha_u$ in the primal linear program $(D_n)$; there are $\delta_1(n)$
of them.
The role of nontrivial constraints and variables
interchanges between the primal and dual linear programs; one sees
that $(D_n)$ has $\delta_1(n)+\delta_2(n)$ variables and $\delta_1(n)$ nontrivial 
constraints.  In addition
the  objective function coefficients and the constraint bound coefficients
interchange in the two programs. 
We let $L_{D}^{opt}(n)$ denote the optimal value
of the dual objective function $\ell_{D}(\beta)$,  
which is the minimal possible value given the constraints, 
explicitly noting its dependence on $n$.

Our results use only the following basic facts about LP duality.


\begin{prop}\label{pr31}
For each $n \ge 2$, the linear  programs $(P_n)$ and $(D_n)$ have equal optimal values:
$L_{P}^{opt}(n)= L_{D}^{opt}(n).$
 In particular,
 any  feasible solution $\beta= (\beta_u, \beta_{\{v, w\}}) $ of the dual linear program 
 $(D_n)$ has
 \begin{equation} \label{eq311}
 \ell_{D}(\beta) \ge L_{P}^{opt}(n).
 \end{equation}
\end{prop}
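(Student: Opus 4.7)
The statement is simply the LP strong duality theorem applied to the pair $(P_n)$ and $(D_n)$, so the plan is to verify that these two programs really are a primal--dual pair in canonical form, verify that strong duality applies (both feasible, both bounded), and then invoke the standard theorem (e.g.\ Schrijver~\cite{Sch86}, eqn.~(19), p.~91).

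First I would cast $(P_n)$ as the standard primal $\max\{c^{T}\alpha : A\alpha\le b,\ \alpha\ge 0\}$, indexing the columns of $A$ by the divisors $u\mid n$ with $u>1$, and the rows by all nontrivial constraints, that is, by the dual variables $\beta_u$ and $\beta_{\{u,v\}}$. Concretely $c_u=1/u$; the $\beta_u$-row is $e_u^{T}$ with $b_u=1$; and for $u\neq v$ the $\beta_{\{u,v\}}$-row is $e_u^{T}+e_v^{T}+e_{uv}^{T}$ with $b_{\{u,v\}}=2$. The only delicate case is $u=v$: the constraint $C(\beta_{\{u,u\}})$ reads $2\alpha_u+\alpha_{u^{2}}\le 2$, so the diagonal entry $A_{\{u,u\},u}=2$. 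Transposing then gives exactly the dual constraint $C(\alpha_u)$ as stated, provided that the term $\beta_{\{u,u\}}$ is counted with weight $2$ in the sum $\sum_{v,\,uv\mid n}^{*}\beta_{\{u,v\}}$; this is precisely the role of the asterisk. So $(D_n)$ is indeed the canonical LP dual of $(P_n)$.

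Next I would dispose of feasibility and boundedness. On the primal side, $\alpha\equiv 0$ satisfies every nontrivial and trivial constraint, and from $\alpha_u\le 1$ we obtain $\ell_P(\alpha)\le\sum_{u\mid n,\,u>1}1/u\le\sigma(n)/n<\infty$; so $(P_n)$ is feasible and bounded above. On the dual side, setting $\beta_u:=1/u$ and $\beta_{\{u,v\}}:=0$ makes each $C(\alpha_u)$ hold with equality on the left, so $(D_n)$ is feasible; and $\ell_D(\beta)\ge 0$ for every feasible $\beta$ by nonnegativity, so $(D_n)$ is bounded below. Strong LP duality now gives $L_P^{opt}(n)=L_D^{opt}(n)$, and \eqref{eq311} is just weak duality: for any dual-feasible $\beta$, $\ell_D(\beta)\ge L_D^{opt}(n)=L_P^{opt}(n)$.

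If I wanted a self-contained proof of weak duality rather than citing it, I would multiply each dual constraint $C(\alpha_u)$ by the corresponding $\alpha_u\ge 0$ (where $\alpha$ is primal-feasible) and sum over $u$; regrouping the right-hand side by dual variable yields $\sum_u\alpha_u\beta_u + \sum_{\{u,v\}}(\alpha_u+\alpha_v+\alpha_{uv})\beta_{\{u,v\}}$ (with the $u=v$ summand becoming $(2\alpha_u+\alpha_{u^{2}})\beta_{\{u,u\}}$, matching the asterisk convention), and then applying the primal inequalities $\alpha_u+\alpha_v+\alpha_{uv}\le 2$ bounds this by $\ell_D(\beta)$. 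The only part needing genuine care is the combinatorial bookkeeping of the $u=v$ diagonal case together with the asterisk, which is what ensures the transpose matches verbatim.
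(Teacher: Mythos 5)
Your proof is correct and follows essentially the same route as the paper: verify feasibility of both programs (using the identical witnesses $\alpha\equiv 0$ for $(P_n)$ and $\beta_u=1/u$, $\beta_{\{u,v\}}=0$ for $(D_n)$), invoke strong duality from Schrijver, and derive \eqref{eq311} from weak duality, which you also verify directly by the same regrouping argument. Your extra attention to the $u=v$ diagonal case and the asterisk convention is a useful explicit check of the transpose bookkeeping, but it does not change the substance of the argument.
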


\begin{proof}
These are standard results in linear programming duality, 
see Schrijver \cite[Sec. 7.4, p. 90-91]{Sch86}.
The equality of primal and dual optimal  values holds whenever
both linear programs in a dual pair have a feasible solution (\cite[Corollary 7.1g, p. 90]{Sch86}).
Here these conditions are satisfied 
by inspection,  for $(P_n)$ we have  the feasible solution taking all $\alpha_u=0$,
and for $(D_n)$  we have the feasible solution taking all $\beta_u= \frac{1}{u}$
and all $\beta_{\{u,v\}}=0$. 

The inequality (\ref{eq311}) follows from {\em weak duality},
which asserts that any primal feasible solution $\alpha$
and dual feasible solution $\beta$ satisfy  $\ell_P(\alpha) \le \ell_{D} (\beta).$
Here this  is verifiable directly using the primal and dual constraints by
noting that
\begin{align*} 
\ell_{P}(\alpha) & = \sum_{u|n, u>1} \frac{1}{u} \alpha_u
 \le \sum_{u|n, u>1}\left(\beta_u + \sum_{\{v,w\},\,vw=u} \beta_{\{v, w\}} +
\sideset{}{^*}{\sum}_{v,\,uv\mid n} \beta_{\{u, v\}} \right) \alpha_u \\
& = \sum_{\substack{u\mid n\\u>1}} \beta_u \alpha_u + 
\sum_{\substack{\{v, w\}\\v, \,w>1, \,vw|n}} \beta_{\{v, w\}}( \alpha_v + \alpha_w + \alpha_{vw})\\
&\le \sum_{\substack{u\mid n\\u>1}}\beta_u + 
2 \sum_{\substack{\{v, w\}\\v,\, w>1, \,vw|n}} \beta_{\{v, w\}}
= \ell_{D}(\beta),
\end{align*}
as required. 
\end{proof}

We first note the following easy lower bound on the optimal
primal value $L_P^{opt}(n)$.\smallskip

\begin{prop}~\label{prop32}
For every $n \ge 2$ there holds
$$
L_P^{opt}(n) 
 \ge \frac{2}{3} \sum_{u|n, u>1} \frac{1}{u} .
$$
\end{prop}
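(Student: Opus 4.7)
The plan is to exhibit an explicit feasible solution to $(P_n)$ whose objective value already matches the claimed lower bound, so weak optimality gives the proposition immediately.

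First I would set $\alpha_u := \tfrac{2}{3}$ for every divisor $u \mid n$ with $u > 1$. I then check each family of constraints in turn. The nonnegativity $\alpha_u \ge 0$ is obvious, and the bound $\alpha_u \le 1$ (the constraints $C(\beta_u)$) holds since $\tfrac{2}{3} < 1$. For each unordered pair $\{u,v\}$ with $u,v > 1$ and $uv \mid n$ (including $u=v$ when $u^2 \mid n$), the constraint $C(\beta_{\{u,v\}})$ becomes
$$\alpha_u + \alpha_v + \alpha_{uv} = \tfrac{2}{3} + \tfrac{2}{3} + \tfrac{2}{3} = 2,$$
so it is satisfied with equality. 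Hence this $\alpha$ is feasible for $(P_n)$.

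Next I would evaluate the objective function at this feasible point:
$$\ell_P(\alpha) = \sum_{u \mid n,\, u > 1} \frac{1}{u}\,\alpha_u = \frac{2}{3} \sum_{u \mid n,\, u > 1} \frac{1}{u}.$$
Since $L_P^{opt}(n)$ is the maximum of $\ell_P$ over all feasible solutions, we conclude $L_P^{opt}(n) \ge \tfrac{2}{3}\sum_{u \mid n,\, u>1} 1/u$, as claimed.

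There is essentially no obstacle here: the uniform assignment $\alpha_u = 2/3$ is the natural choice that saturates every nontrivial three-term inequality simultaneously, and the proof is a two-line verification. The interesting point to flag is that this constant solution already captures the correct order of magnitude of $L_P^{opt}(n)$; improvements beyond the factor $2/3$ would require exploiting structure among the divisors (for instance, setting $\alpha_u = 1$ for divisors $u$ above some cutoff and $\alpha_u = 0$ below), which is the sort of refinement that the dual analysis in the next sections is designed to rule out up to constants.
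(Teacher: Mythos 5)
Your proof is correct and is exactly the paper's argument: take the constant feasible point $\alpha_u = \tfrac{2}{3}$, check it saturates the three-term constraints, and read off the objective value. Nothing to add.
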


\begin{proof}
We take all $\alpha_u =\frac{2}{3}$. 
This is obviously a feasible solution to  the linear program $(P_n)$
and its objective function value  
$\ell_{P}(\alpha) =\frac{2}{3}  \sum_{u|n, u>1} \frac{1}{u} .$
This value can be no larger than $L_P^{opt}(n)$, giving the result.
\end{proof}

For later use, we  restate the dual objective function in the  special case 
of a dual feasible solution that attains equality in all   the nontrivial constraints.
\begin{prop}
\label{restateddual}
In the dual linear program $(D_n)$ if a feasible solution $\beta$ attains  equality 
in all  the nontrivial
constraints $C(\alpha_u)$, then
$$
\ell_D(\beta)= \sum_{\substack{u\mid n\\ u>1}}\frac{1}{u}
-\sum_{\substack{\{v,w\}\\vw\mid n,~v,w>1}}\beta_{\{v,w\}}.
$$
\end{prop}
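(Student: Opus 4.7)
The plan is to exploit the equality hypothesis to eliminate the $\beta_u$ variables from the objective function $\ell_D(\beta)$, reducing everything to a single sum over pairs $\{v,w\}$. Concretely, the hypothesis that equality holds in every nontrivial constraint $C(\alpha_u)$ gives, for each divisor $u>1$ of $n$,
\begin{equation*}
\beta_u \;=\; \frac{1}{u} \;-\; \sum_{\{v,w\},\,vw=u} \beta_{\{v,w\}} \;-\; \sideset{}{^*}{\sum}_{v,\,uv\mid n} \beta_{\{u,v\}}.
\end{equation*}
Summing this identity over $u\mid n$, $u>1$ gives an expression for $\sum_{u} \beta_u$, and then one substitutes the result into the definition of $\ell_D(\beta)$.

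The heart of the argument is the combinatorial bookkeeping that arises after the swap of summation order. For the first double sum, each unordered pair $\{v,w\}$ with $v,w>1$ and $vw\mid n$ satisfies $vw>1$, so it contributes exactly once: we obtain $\sum_{\{v,w\}} \beta_{\{v,w\}}$. For the second (starred) double sum, I will check that each pair $\{u,v\}$ with $u,v>1$ and $uv\mid n$ is counted twice: when $u\ne v$, the pair appears once with $u$ in the outer index and $v$ inner, and once with $v$ outer and $u$ inner; when $u=v$, it appears only in the $u$-indexed outer slot, but the asterisk convention doubles its weight. Thus the starred double sum equals $2\sum_{\{u,v\}} \beta_{\{u,v\}}$.

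Putting the pieces together, one obtains
\begin{equation*}
\sum_{\substack{u\mid n\\u>1}} \beta_u \;=\; \sum_{\substack{u\mid n\\u>1}} \frac{1}{u} \;-\; \sum_{\substack{\{v,w\}\\vw\mid n,\,v,w>1}} \beta_{\{v,w\}} \;-\; 2\sum_{\substack{\{u,v\}\\uv\mid n,\,u,v>1}} \beta_{\{u,v\}}.
\end{equation*}
Adding $2\sum_{\{u,v\}} \beta_{\{u,v\}}$ to both sides collapses the last term against the $2\sum$ in the definition of $\ell_D(\beta)$, producing exactly the claimed identity.

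The only real obstacle is the multiplicity check in the starred sum: the asterisk convention was set up precisely so that the $u=v$ case behaves uniformly with the $u\ne v$ case under this swap, and verifying this amounts to rereading the definition of $\sideset{}{^*}{\sum}$ given just after the dual LP statement. Once that accounting is done, the rest is a direct substitution.
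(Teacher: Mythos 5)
Your argument is correct and is essentially the same as the paper's: both proofs sum the (equality version of the) nontrivial constraints over all $u$ and count how many times each $\beta_{\{v,w\}}$ appears, then substitute into $\ell_D(\beta)$. The paper states this count directly as three per pair, while you split it as one occurrence from the $vw=u$ sum plus two (with the asterisk convention) from the starred sum, which is the same bookkeeping.
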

\begin{proof}
Assume that equality holds in all the nontrivial 
constraints of $(D_n)$. Adding them together yields
$$
\sum_{u|n,\, u >1} \beta_u + 3\sum_{\substack{\{v,w\}\\vw\mid n,\,v,w>1}}
 \beta_{\{v, w\}} =
 \sum_{u|n,\, u>1} \frac{1}{u}.
$$
(One checks here that each $\beta_{\{v, w\}}$ occurs exactly three times across all the constraints.)
Therefore, using the definition of $\ell_D(\beta)$, we have
$$
\ell_D(\beta)
=\sum_{u|n,\,u>1}\beta_u+2\sum_{\substack{\{v,w\}\\vw\mid n,\,v,w>1}}\beta_{\{v,w\}}
=  \sum_{u|n,\, u>1} \frac{1}{u} - \sum_{\substack{\{v,w\}\\vw\mid n,\,v,w>1}}
 \beta_{\{v, w\}},
$$
as asserted.
\end{proof}

\section{Primal Linear Program Bound}\label{sec4}

Our object is to  relate the 
bound for the primal  linear program $(P_n)$ to the density function $D(n)$.
We establish such a relation for integers of a special form.

Given a product-free set $\S$ $(\bmod~n)$,
note that $\S$ is the disjoint union of the sets $\S_u$ for $u\mid n$, so that
$
|\S|=\sum_{u\mid n}|\S_u|=
\sum_{u\mid n}|\T_u|\alpha_u=\sum_{u\mid n}\varphi\left(\frac nu\right)
\,\alpha_u,
$
and hence
$$
D(\S)=\frac1n|\S|= \sum_{u\mid n} \frac{1}{n} \varphi\left(\frac{n}{u}\right)\, \alpha_u.
$$
On the other hand, the linear program $(P_n)$
has the  objective function 
$$
\ell_{P}(\alpha) = \sum_{u|n, u>1} \frac{1}{u} \alpha_u.
$$
These two functions assign different weights to the variables $\alpha_u$.
These weights are related by the inequality
$$\frac{1}{n} \varphi\left(\frac{n}{u}\right)\ge \frac{\varphi(n)}{n} \, \frac{1}{u},$$
which  goes in the wrong direction for obtaining an upper bound,
but  has the positive feature that
  equality holds for those divisors $u$ of $n$  such that each prime factor 
of $u$ divides $n/u$.  The equality case gives exactly those $u$ such
that each  prime divisor of $u$ divides $n$ to a non-maximal power,
and in this case  the coefficient of these variables $\alpha_u$ in
$\ell_P(\alpha)$
is exactly $\frac{n}{\varphi(n)}$ times that of the same variable appearing in $D(\S)$.
 This suggests  that $D(n)$
be compared with $\frac{\varphi(n)}{n} L_{P}^{opt}(n)$,
and that  this be done in  cases  when all primes dividing
$n$ do so to a high power.  We obtain the following result,
which controls the loss from the inequality above.


\begin{theorem}\label{th41a}
Let $n$ be an arbitrary positive integer and set 
$$
X=X(n)= \lfloor \log n \rfloor,\quad
N=N(n)=\left(n\prod_{p\le X}p\right)^X.
$$
Then $n\mid N$ and
\begin{equation}\label{opt-bd}
D(N) \le \frac{\varphi(N)}{N} \left( 1 + L_{P}^{opt}(N)\right).
\end{equation}
\end{theorem}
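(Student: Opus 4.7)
The plan is to reduce to bounding $D(N)$ using Lemma~\ref{lem21} (noting that $n$ divides $M:=n\prod_{p\le X}p$ and $M$ divides $M^X = N$), and then to decompose a product-free set $\S\subset\Z/N\Z$ attaining $D(N)$ according to the $\gcd$ with $N$. Writing $\alpha_u = |\S_u|/\varphi(N/u)$, one has $D(N) = \frac{1}{N}\sum_{u\mid N}\varphi(N/u)\alpha_u$. I would use the multiplicativity identity
$$
\frac{\varphi(N/u)}{N} \;=\; \frac{\varphi(N)}{Nu}\prod_{p\mid u,\ p\,\nmid\, N/u}\frac{p}{p-1},
$$
and classify each $u\mid N$ as \emph{good} if every prime dividing $u$ also divides $N/u$, and \emph{bad} otherwise. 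For every good $u>1$, the identity collapses to $\varphi(N/u)/N = \varphi(N)/(Nu)$, matching (up to the factor $\varphi(N)/N$) the weight appearing in the primal LP objective $\ell_P(\alpha)$.

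Separating the sum by this classification yields
$$
D(N) \;=\; \frac{\varphi(N)}{N}\alpha_1 \;+\; \frac{\varphi(N)}{N}\sum_{\substack{u>1\\u\text{ good}}}\frac{\alpha_u}{u} \;+\; \sum_{u\text{ bad}}\frac{\varphi(N/u)}{N}\alpha_u.
$$
The first term is at most $\varphi(N)/N$, since $\alpha_1\le 1$ by \eqref{eq231}; this supplies the ``$+1$'' in the target bound. For the second term, Lemma~\ref{lem23} ensures that $(\alpha_u)_{u>1}$ is a feasible solution of $(P_N)$, whence $\sum_{u>1}\alpha_u/u = \ell_P(\alpha) \le L_P^{opt}(N)$, and in particular the good-$u$ subsum is bounded by the same quantity. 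These two contributions together already give exactly $\frac{\varphi(N)}{N}(1+L_P^{opt}(N))$.

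It remains to absorb the bad-$u$ contribution. For any bad $u$ some prime $p\mid u$ satisfies $p\nmid N/u$, so the full $p$-part of $N$ divides $u$; since that $p$-part is at least $2^X$ with $X = \lfloor\log n\rfloor$, each bad $u$ is very large. A union bound over such $p$, combined with the identity $\sum_{u'\mid m}\varphi(m/u') = m$, yields
$$
\sum_{u\text{ bad}}\frac{\varphi(N/u)}{N} \;\le\; \sum_{p\mid N}\frac{1}{p^{v_p(N)}} \;=\; O\!\left(\frac{\log n}{n^{\log 2}}\right),
$$
which is negligible compared to $\varphi(N)/N \asymp 1/\log\log n$. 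The main obstacle is producing the claimed bound \emph{exactly}, not just up to this vanishing error. I expect this to be handled by a case split on $\alpha_1$: when $D(\S)\ge 1/2$, Lemma~\ref{lem22} forces $\alpha_1 = 0$, which frees the ``$+1$'' to absorb the bad slack; when $D(\S)<1/2$, Proposition~\ref{prop32} makes $(\varphi(N)/N)\,L_P^{opt}(N)$ approach $1-\varphi(N)/N$ from below (because $N$ has each prime to a high power, forcing $\varphi(N)\sigma(N)/N^2$ close to $1$), so the right-hand side exceeds $D(\S)<1/2$ and the inequality is trivial.
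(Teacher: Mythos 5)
Your proposal is correct and follows essentially the same route as the paper's proof: decompose $D(\S)$ by $\gcd$, separate divisors according to whether every prime of $u$ divides $N/u$ (the paper factors $u=bv$ with $b\| N$ and $v\mid (N/b)/\rad(N/b)$, which is the same split), bound the matching part by $\frac{\varphi(N)}{N}L_P^{opt}(N)$, estimate the remainder as $O(2^{-X})$, and dispatch via a case split between $D(\S)\ge\frac12$ (where Lemma~\ref{lem22} kills $\alpha_1$ and the error term must be shown $\le\varphi(N)/N$, which is the paper's claim~\eqref{bstack}) and $D(\S)<\frac12$ (where Proposition~\ref{prop32} makes the right side exceed $\frac12$). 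The only difference is technical: you bound the bad contribution by a union bound over primes giving $\sum_{p\mid N} p^{-v_p(N)}$, while the paper uses $\sigma(m)/m\le m/\varphi(m)$ to arrive at $\sum_{b\| N,\,b>1}1/b$; both are of the same order and both require the Rosser--Schoenfeld estimate (and the reduction to $X\ge 6$) that you gesture at but do not carry out explicitly.
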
 

\begin{proof}
We first note that the theorem holds for all cases where $X=0$ or $1$, which correspond to
$n\le7$.
If $X=0$, then $N=1$ and $D(N)=0$, so the inequality holds.  If $X=1$, then $N=n$.
In each case up to $n=7$ we have $D(N)<\frac{1}{2}\le\varphi(N)/N$
except for $n=N=6$, in which case it is easy to see that
$D(N)=\frac13=\varphi(N)/N$.
Thus we may assume that $n\ge8$, and hence $X\ge2$.  

We next show that if $X \ge 2$ and $D(N) \le \frac{1}{2}$ then (\ref{opt-bd}) holds. This would follow
if we show that $\frac{\varphi(N)}{N} \left( 1 + L_{P}^{opt}(N)\right)> \frac{1}{2}$
holds when $X \ge 2$. We observe that
$$
\sum_{u\mid N}\frac1u\ge\prod_{p\mid N}\left(1+\frac1p+\frac1{p^2}+\dots+\frac1{p^X}\right)
\ge\prod_{p\mid N}\left(1+\frac1p+\frac1{p^2}\right).
$$
Using this fact together with Proposition \ref{prop32} and $X \ge 2$ we obtain
\begin{align*}
\frac{\varphi(N)}{N} \left( 1 + L_{P}^{opt}(N)\right) & \ge 
\frac{\varphi(N)}{N} \left(1 + \frac{2}{3} \sum_{u|N, u>1} \frac{1}{u}\right)
> \frac{\varphi(N)}N\cdot  \frac{2}{3}\sum_{u|N} \frac{1}{u} \\
&\ge\frac{2}{3}\prod_{p\mid N}\left(1-\frac1p\right)\left(1+\frac1p+\frac1{p^2}\right)
=\frac23\prod_{p\mid N}\left(1-\frac1{p^3}\right)>\frac2{3\zeta(3)}>\frac12.
\end{align*}

It  remains to treat the cases  where $X \ge 2$ and $D(N) > \frac{1}{2}$.  From~\cite{PS11}, this
implies that we may assume that $\omega(N)\ge6$.  Note that if $X\le5$, then
$n<\e^6<403$, so that there are at most two different primes greater than 5 dividing $n$,
and so $\omega(N)\le 5$.  Hence we may assume that $X\ge6$.

Now suppose  $\S$ is a product-free subset of $\Z/N\Z$ having $D(\S) > \frac{1}{2}$.
We take $\alpha_u := \alpha_u(\S)$, as in Lemma \ref{lem23}, whose values for $u|N, u>1$
give a feasible solution to $(P_N)$, and  Lemma \ref{lem22} gives $\alpha_1=0$.
Every $u\mid N$ is uniquely factorable as $u = bv$,
where $b\|N$ and $v\mid (N/b)/\rad(N/b)$.  We have
$\varphi(N/u)=\varphi(N)/(\varphi(b)v)$.  Thus,
$$
|\S|=
\sum_{\substack{u\mid N\\u>1}} |\S_{u}| = 
\sum_{\substack{u\mid N\\u>1}}\alpha_{u}\varphi\left(\frac {N}{u}\right)
=\varphi(N)\sum_{v\mid \frac N{\rad(N)}}\frac{\alpha_v}{v}
+\varphi(N)\sum_{\substack{b\|N\\ b>1}}\frac b{\varphi(b)}
\sum_{v\mid \frac{N/b}{\rad(N/b)}}\frac{\alpha_{vb}}{vb}.
$$
Using $\alpha_{vb}\le 1$, the second expression on the right is at most
$$
\varphi(N)  \sum_{\substack{b\|N\\ b>1}}
\frac1{\varphi(b)}\cdot\frac{\sigma(N/b)}{N/b}
\le\varphi(N) \sum_{\substack{b\|N\\ b>1}}
\frac1{\varphi(b)}\cdot\frac{N/b}{\varphi(N/b)}
=N \sum_{\substack{b\|N\\ b>1}}\frac1b ,
$$
using $\sigma(m)/m\le m/\varphi(m)$ (see~\cite[Theorem~329]{HW}).
Hence
\begin{equation}
\label{eq-binvolve}
\frac{1}{N}|{\S}|\le\frac{\varphi(N)}{N}\sum_{\substack{u\mid N\\u>1}}\frac{\alpha_u}{u}+
\sum_{\substack{b\|N\\ b>1}}\frac{1}{b}.
\end{equation}
We now claim that
\begin{equation}\label{bstack}
\sum_{\substack{b\| N\\ b>1}} \frac{1}{b} \, \le\, \frac{\varphi(N)}{N}.
\end{equation}
We defer its proof.
Using \eqref{eq-binvolve} and the claim (\ref{bstack}) 
we deduce that
$$
\frac1N|{\S}|  \le   
\frac{\varphi(N)}{N}\left(1+\sum_{u|N, u>1}\frac {\alpha_u}{u}\right) 
 \le  \frac{\varphi(N)}{N}\left( 1 + L_{P}^{opt}(N) \right).
$$
Since this holds for all product-free sets $\S\subset\Z/n\Z$  with $D(\S) > \frac{1}{2}$,
we conclude that the bound (\ref{opt-bd}) holds for $D(N)$, completing the
argument. 

It remains to prove the claim (\ref{bstack}).
Since each number $b$ with $b\|N$ is an $X$th power, we have
\begin{equation}
\label{eq-unitary}
\sum_{\substack{b\| N\\ b>1}}\frac{1}{b} <\sum_{m=2}^\infty\frac1{m^X}
<\frac1{2^X}+\int_2^\infty \frac{{\rm d}t}{t^X}\le\frac{1.4}{2^X},
\end{equation}
using $X\ge6$.
Since the number of distinct primes dividing $n$ that exceed $X$ is
at most $\log n/\log X<(X+1)/\log X<X$ for $X\ge6$, we have
\begin{equation}
\label{eq-premertens}
\frac{\varphi(N)}{N}= \prod_{\substack{p\mid n\\ p> X}}
\left(1- \frac{1}{p}\right)\prod_{p \le X} \left(1- \frac{1}{p}\right)
> \left(1-\frac{1}{X}\right)^{X} \prod_{p \le X} \left(1- \frac{1}{p}\right)
> \frac{1}{3} \prod_{p \le X} \left(1- \frac{1}{p}\right).
\end{equation}
Using an explicit estimate of Rosser and Schoenfeld
\cite[Corollary to Theorem 7]{RS62} and \eqref{eq-premertens}, 
we see that
\begin{equation}
\label{eq-mertens}
\frac{\varphi(N)}N>\frac1{3\e^\gamma\log X}\left(1-\frac1{\log^2X}\right)
\end{equation}
and so \eqref{eq-unitary} and \eqref{eq-mertens} imply that 
\eqref{bstack} holds when $X\ge6$.
\end{proof}


\section{Proof of Theorem~\ref{thm12}}\label{sec5} 

Let $n$ be a large integer, let $X=\lfloor\log n\rfloor$, and let
$$
N=N(n)=\left(n\prod_{p\le X}p\right)^X,
$$
as in Theorem~\ref{th41a}.
Lemma ~\ref{lem22} implies that any product-free set $\S$ having
$D(\S) \ge \frac{1}{2}$ necessarily has $\alpha_1=0$, and for these,
Lemma~\ref{lem23} shows that the remaining $\alpha_u$ with $u>1$
give a feasible solution to $(P_N)$.  

To bound the primal LP objective function from above, we 
investigate the 
dual linear program $(D_N)$.  A trivial choice for 
the variables $\beta$, in which all  the nontrivial constraints hold
with equality,  is to have each $\beta_u=1/u$ and each
$\beta_{\{u,v\}}=0$.  
This gives $L_{D}^{opt} (N) \le \sum_{u|N, u>1} \frac{1}{u}=\frac{\sigma(N)}{N}-1$.
Using   Theorem~\ref{th41a} and Proposition~ \ref{pr31}, we obtain
\begin{equation}
\label{eq-trivial}
D(N) \le \frac{\varphi(N)}{N}( 1+ L_{P}^{opt}(N))=
\frac{\varphi(N)}{N}( 1+ L_{D}^{opt}(N)) \le
\frac{\varphi(N)}{N}\frac{\sigma(N)}{N}<1,
\end{equation}
when $N>1$.
Using Theorem~\ref{th41a},~\eqref{eq-trivial} leads to an estimate of
the shape $D(n)<1-c/n^{\log 2}$,  which is much worse than our estimate~\eqref{eq-simple}.
However we  will  improve on this upper bound by deforming this solution via
``mass shifting'' from some of the variables
$\beta_u$ to the other variables $\beta_{\{v,w\}}$, while keeping all 
the nontrivial constraints tight.

To maximize the gain, Proposition \ref{restateddual} suggests that
one should move as much ``mass'' as possible onto the variables
$\beta_{\{u,v\}}$.
As a  critical parameter for the mass-shifting, we introduce 
\begin{equation}\label{eq-crit}
k=k(X)=\left\lfloor\frac{\rm e}4\log\log X\right\rfloor.
\end{equation}
We discuss this parameter choice in Remark \ref{rem-crit}
after the proof.

\begin{lemma}
\label{lem-estimates}
With the value of $k$ just defined, we have
$$
\binom{2k}{k}\asymp\frac{4^k}{\sqrt{k}}\asymp
\frac{(\log X)^{\frac{\rm e}2\log2}}
{\sqrt{\log\log X}}\asymp\frac{(\log\log X)^k}{k!}\asymp
\sum_{\substack{m\le X\\\Omega(m)=k}}\frac1m.
$$
\end{lemma}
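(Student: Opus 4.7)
The plan is to prove the chain of four $\asymp$ relations by separating them into three elementary Stirling-based estimates and one substantive estimate invoking a classical result about integers with a prescribed number of prime factors. Writing $k=(\e/4)\log\log X-\theta$ with $\theta\in[0,1)$, note that $k\to\infty$ and $k\asymp \log\log X$ as $X\to\infty$.

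The first three links are routine. Stirling immediately gives $\binom{2k}{k}\asymp 4^k/\sqrt{k}$. For the second link, $4^{(\e/4)\log\log X}=(\log X)^{(\e/2)\log 2}$, and since $\theta$ is bounded we have $4^k\asymp (\log X)^{(\e/2)\log 2}$; combined with $\sqrt{k}\asymp \sqrt{\log\log X}$ this yields the equivalence. For the third, apply Stirling in the form $k!\asymp \sqrt{k}(k/\e)^k$ to get $(\log\log X)^k/k!\asymp(\e\log\log X/k)^k/\sqrt{k}$, then note that $\e\log\log X/k=4(1+\theta/k)$ with $(1+\theta/k)^k\in[1,\e^{\theta}]\subset[1,\e)$, giving $(\e\log\log X/k)^k\asymp 4^k$.

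The substantive link is the last: setting $S_k(X):=\sum_{m\le X,\,\Omega(m)=k}1/m$, I must show $S_k(X)\asymp (\log\log X)^k/k!$. For this I would appeal to the Sathe--Selberg theorem (see, for example, Tenenbaum, \emph{Introduction to Analytic and Probabilistic Number Theory}, Theorem II.6.5), which states that for any fixed $A<2$, uniformly in $1\le k\le A\log\log X$ the count $\pi_k(X):=\#\{m\le X:\Omega(m)=k\}$ satisfies $\pi_k(X)\asymp X(\log\log X)^{k-1}/((k-1)!\log X)$. Since $\e/4<2$, this applies to our $k$. By partial summation,
\[
S_k(X)=\frac{\pi_k(X)}{X}+\int_{2^k}^{X}\frac{\pi_k(t)}{t^2}\,dt,
\]
and substituting the Sathe--Selberg bound (with $t$ in place of $X$) together with the change of variable $u=\log\log t$ converts the integrand to $\asymp u^{k-1}/(k-1)!$. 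The integral is then $\asymp (\log\log X)^k/k!$, with the dominant contribution coming from $u$ close to $\log\log X$ (the contribution near the lower limit $u\approx\log k$ is a negligible $(\log k)^k/k!$).

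The main obstacle is this last link, which depends on a uniform version of Sathe--Selberg throughout the range $k\asymp\log\log X$. The hypothesis $k\le A\log\log X$ with $A<2$ is amply satisfied because $\e/4\approx 0.68$, so no large-deviation refinements are needed; an alternative, entirely self-contained route would be induction on $k$ via the recursion $k\,S_k(X)=\sum_{p\le X}(1/p)\,T_{k-1}(X/p,p)$ (where $T_{k-1}(Y,p)$ restricts to $n$ with smallest prime factor $\ge p$), combined with Mertens' estimate.
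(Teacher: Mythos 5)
Your handling of the first three $\asymp$ relations via Stirling is correct and matches the paper's approach essentially verbatim. For the fourth relation, you use the same key ingredients as the paper (Sathe--Selberg plus partial summation), but there is a genuine gap in how you justify the integral.

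The issue is the lower range of the integral $\int_{2^k}^X \pi_k(t)/t^2\,{\rm d}t$. The uniform Sathe--Selberg estimate $\pi_k(t)\asymp \frac{t}{\log t}\frac{(\log\log t)^{k-1}}{(k-1)!}$ is only valid when $k\le(2-\epsilon)\log\log t$; near the lower endpoint $t=2^k$ one has $\log\log t\sim\log k$, so $k/\log\log t\to\infty$ and the theorem does not apply. You write ``substituting the Sathe--Selberg bound (with $t$ in place of $X$) \ldots converts the integrand to $\asymp u^{k-1}/(k-1)!$'' over the entire range, and then claim the contribution near $u\approx\log k$ is ``a negligible $(\log k)^k/k!$.'' That number is obtained by pretending Sathe--Selberg held there, but in fact for $t$ with $\log\log t\ll k$ the count $\pi_k(t)$ is dramatically \emph{larger} than the Sathe--Selberg formula (e.g.\ $\pi_k(2^{2k})\gg 2^k/k$ already from integers of the form $2^{k-1}p$, whereas the formula would predict something super-exponentially small). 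So your displayed error term is not a bound on anything and the upper-bound half of the $\asymp$ does not close. The paper handles this by truncating the integral at $t={\rm e}^{\sqrt{\log X}}$: on $[{\rm e}^{\sqrt{\log X}},X]$ one has $\log\log t\ge\tfrac12\log\log X$, so $k\le(2-\epsilon)\log\log t$ holds uniformly and Sathe--Selberg applies; the discarded portion $\int_1^{{\rm e}^{\sqrt{\log X}}}\pi_k(t)/t^2\,{\rm d}t$ is then bounded \emph{trivially} (via $\pi_k(t)\le t$) by $O(\sqrt{\log X})$, which is negligible since $\tfrac{\rm e}{2}\log 2>\tfrac12$. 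You need some such trivial bound on the low range, not an application of Sathe--Selberg there; once that line is inserted your argument is correct and is essentially the paper's.
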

\begin{proof}
The first three
 relations are clear from Stirling's formula and the
definition of $k$.  The last relation can be derived using  a famous theorem of Sathe
and Selberg \cite{Sel54} (see also~\cite[Theorem 7.19]{MV07}).
We use only the somewhat weaker version:
for all $x\ge20$ and $\epsilon>0$, over the range 
of integers $j$ with $1\le j\le(2-\epsilon)\log\log x$ the estimate
\begin{equation}
\label{eq-SS}
\sum_{\substack{m\le x\\\Omega(m)=j}}1\asymp\frac{x}{\log x}
\frac{(\log\log x)^{j-1}}{(j-1)!}
\end{equation}
holds uniformly, the implied constants depending only on $\epsilon$.
By partial summation, we have
\begin{align*}
\sum_{\substack{m\le X\\\Omega(m)=k}}\frac1m
&=
\frac1X\sum_{\substack{m\le X\\\Omega(m)=k}}1
+\int_{1}^{X}\frac1{t^2}
\sum_{\substack{m\le t\\\Omega(m)=k}}1\,{\rm d}t\\
&=\int_1^X\frac1{t^2}\sum_{\substack{m\le t\\\Omega(m)=k}}1\,{\rm d}t+O(1)
=
\int_{{\rm e}^{\sqrt{\log X}}}^{X}\frac1{t^2}
\sum_{\substack{m\le t\\\Omega(m)=k}}1\,{\rm d}t+O\left(\sqrt{\log X}\right).
\end{align*}
Using \eqref{eq-SS} and the already proved third relation, 
\begin{align*}
\int_{{\rm e}^{\sqrt{\log X}}}^X\frac1{t^2}
\sum_{\substack{m\le t\\\Omega(m)=k}}1\,{\rm d}t&\asymp
\int_{{\rm e}^{\sqrt{\log X}}}^X\frac1{t\log t}\frac{(\log\log t)^{k-1}}{(k-1)!}
\,{\rm d}t\\
&=\left(1-2^{-k}\right)
\frac{(\log\log X)^k}{k!}
\asymp
\frac{(\log X)^{\frac{\rm e}2\log2}}{\sqrt{\log\log X}}.
\end{align*}
Since $\frac{\rm e}2\log2>\frac12$, the error $O(\sqrt{\log x})$ is negligible, and
so the last relation in the lemma follows.
\end{proof}

Based on Lemma \ref{lem-estimates} we choose as a  weight parameter
$$A=A(X) :=c_0(\log X)^{\frac{\rm e}2\log2}/\sqrt{\log\log X}.$$
 where
$c_0$ is chosen large enough to assure that for all large $n$ we have both
\begin{equation}
\label{eq-recipsum}
A \ge \binom{2k}{k},\quad \frac{1}{2} A \ge \sum_{\substack{m\le X\\\Omega(m)=k}}\frac{1}{m}.
\end{equation}

We now define the variable values for a 
better feasible solution to the dual linear program $(D_N)$.
If $uv\mid N$, $u,v>1$, we set
$$
\beta_{\{u,v\}}:=\left\{ \begin{array}{ll} 
\frac1{uvA}, & ~\mbox{when}~u, v \le X\hbox{ and }\Omega(u)=\Omega(v)=k,\\
~&~\\
0 &~\mbox{otherwise}.
\end{array}
\right.
$$
We then choose the variables
$\beta_u$ by the rule
$$
\beta_u :=\frac{1}{u}-\sideset{}{^*}{\sum}_{\substack{v\\ uv\mid N}}
\beta_{\{u,v\}}-\sum_{\substack{\{v,w\}\\ vw=u}}\beta_{\{v,w\}},
$$
where we continue to understand that $u,v,w$ run over divisors of $N$
that exceed~1. That is, these variables are obtained from the $\beta_u$ in the ``trivial'' solution 
by subtracting off exactly the
amount required by the new $\beta_{\{u,v\}}$ needed to keep the
constraints $C(\alpha_u)$  tight. The parameter $A$ in the definition of $\beta_{\{u,v\}}$ 
serves as
a weight chosen (approximately) optimally  so that the new $\beta_u$ will remain nonnegative.

Thus, we have equality in the constraints $C(\alpha_u)$, and 
we next show  that we have nonnegativity for our variables $\beta_u$,
so that we have a dual feasible solution.
First note that if $\Omega(u)\ne k,2k$, then $\beta_u=1/u>0$.
Now suppose that $\Omega(u)=k$.  Then,
$$
\beta_u=\frac1u-\starsum_{\substack{v\\ uv\mid N}}\beta_{\{u,v\}}
\ge\frac1u-\frac2{uA}\sum_{\substack{v\le X\\ \Omega(v)=k}}\frac1v \ge 0,
$$
by \eqref{eq-recipsum}.  Finally suppose that $\Omega(u)=2k$.  Then,
$$
\beta_u=\frac1u-\frac1{uA}
\sum_{\substack{\{v,w\}\\vw =u\\ \Omega(v)=\Omega(w)=k}}1 \ge 0.
$$
The inequality holds
because the number of summands here is at most the number of partitions 
of a $2k$-element set into two $k$-element sets, which is 
$\frac{1}{2}\binom{2k}{k}< A$,
by \eqref{eq-recipsum}.

Thus, $\beta$ is feasible for $(D_N)$, and so $\ell_D(\beta)\ge L_P^{opt}(N)$,
by Proposition~\ref{pr31}.  We now get an upper bound for $\ell_D(\beta)$
using Proposition~\ref{restateddual}:
$$
\ell_D(\beta)=\sum_{\substack{u|N\\ u>1}}\frac1u-\sum_{\{u,v\}}\beta_{\{u,v\}}
\le  \sum_{\substack{u|N\\ u>1}}\frac1u-\frac{1}{2} 
\sum_{\substack{u,\,v\\ u,v\le X\\ \Omega(u)=\Omega(v)=k}} \beta_{\{u,v\}}
=\Big( \frac{\sigma(N)}{N}-1\Big) -\frac1{2A}\left(\sum_{\substack{u\le X\\\Omega(u)=k}}
\frac1u\right)^2.
$$
By Lemma~\ref{lem-estimates}, the sum here is of order 
$(\log X)^{\frac{\rm e}2\log2}/\sqrt{\log\log X}$, and $A$ is of
this order as well.  Since $\sigma(N)/N\le N/\varphi(N)$, we obtain
\begin{equation}\label{beta-bd}
 \ell_D(\beta)\le\Big(\frac{N}{\varphi(N)}-1\Big) -c_1\frac{(\log X)^{\frac{\rm e}2\log2}}
{\sqrt{\log\log X}}
\end{equation}
for some absolute constant $c_1>0$ and all sufficiently large $n$.

We next obtain an upper bound for $D(N)$. 
 Theorem ~\ref{th41a} 
and Proposition \ref{pr31}  combine
 with (\ref{beta-bd}) to yield
$$
D(N)   \le  \frac{\varphi(N)}{N}\Big( 1+ L_P^{opt}(N)\Big)
\le  \frac{\varphi(N)}{N}\Big( 1+ \ell_D(\beta)\Big)
 \le  \frac{\varphi(N)}{N} \left(\frac{N}{\varphi(N)} -
c_1\frac{(\log X)^{\frac{\rm e}2\log2}}{\sqrt{\log\log X}}\right).
$$
Now the lower bound (\ref{eq-mertens}) yields
$$
D(N) \le1-\frac{c}{(\log X)^{1-\frac{\rm e}2\log2}\sqrt{\log\log X}}
$$
for some positive constant $c$ and for all $n$ sufficiently
large.

Finally, since  $D(n)\le D(N)$ by Lemma~\ref{lem21},
this bound  applies to $D(n)$ as well.  But $\log X\le\log\log n$, so
$$
D(n)\le 1-\frac{c}{(\log\log n)^{1-\frac{\rm e}2\log2}\sqrt{\log\log\log n}}
$$
holds for $n$ sufficiently large. Since $D(n)<1$ for all $n$, by adjusting
$c$ if necessary, we have the inequality holding for all $n\ge20$.
This completes the proof of Theorem~\ref{thm12}.

\begin{remark}\label{rem-crit}
The choice of the critical parameter   (\ref{eq-crit}) in the argument is based on
specific features  of the dual LP.
Each  dual  variable  $\beta_{ \{u, v\}}$
appears  with weight $1$ in three nontrivial dual LP constraints,
namely in $C(\alpha_u)$,
$C(\alpha_v)$ and $C(\alpha_{uv})$. (If $u=v$ it appears in $C(\alpha_u)$
with weight $2$.)   If some mass is assigned to the variable $\beta_{\{u, v\}}$
this mass counts towards the constraint masses  $\frac{1}{u},
\frac{1}{v}, \frac{1}{uv}$ (i.e., the right hand
sides of the dual nontrivial constraints for which $\beta_{\{u, v\}}$
appears.) 
Now, for any fixed value of the parameter $k$, at least one of $w= u,
v, uv$ will satisfy either $\Omega(w) \le k$ or $\Omega(w) > 2k$.
This, together with the condition of equality of all dual constraints
(note that the contribution from the $\beta_u$-terms is positive), gives that
\begin{equation}\label{eq-key}
\sum_{ \{u,v\}} \beta_{\{u, v\}} \leq
\sum_{u : \Omega(u) \not \in [k+1,2k]} 
\left( 
\starsum_{v : uv |N} \beta_{\{u, v\}}
+ \sum_{ \{v,w\}: vw = u} \beta_{\{v, w\}}
\right)
\leq
\sum_{u : \Omega(u) \not \in [k+1,2k]}\,  \frac{1}{u},
\end{equation}
which imposes an upper bound on the total mass shifting.
The value (\ref{eq-crit}) for $k$ minimizes the right side, and establishes 
the strongest upper limit of this kind on
 the mass that can be moved.   Since a positive fraction of  the mass on
 the right side occurs  at level  $k$, this suggests 
 attempting to move  mass on exactly  this  level. The proof  then shows that this upper
 limit can be attained, up to a  constant factor.
Finally the restriction in the definition of $\beta_{\{u,v\}}$ to $u,v\le X$ is
convenient and does not appreciably alter the situation.
 \end{remark}

\section{Proof of Theorem \ref{thm11}}\label{sec6}

This result is proved by a modification of the proof of
Theorem~1 in \cite{KLP11}.

Let $\ell_x$ denote the least common multiple of the integers in
$[1,x]$, and let $n_x=\ell_x^2$.  As in the previous section,
let $k=k(x)=\lfloor\frac{\rm e}4\log\log x\rfloor$.  Instead of the
specific values $k$ and $2k$ which occurred in the previous section, the key
now is the interval $(k,2k)$.
 In the proof of Theorem~2.1 in~\cite{KLP11}, we showed that
$$
D(n_x)\ge
1-\frac{\pi(x)}{x}-
\frac{\varphi(n_x)}{n_x}\sum_{\substack{d\mid\ell_x\\ \Omega(d)\not\in(k,2k)}}
\frac{1}{d}
\ge1-\frac{\pi(x)}{x}-
\frac{\varphi(n_x)}{n_x} \sum_{\substack{P(d)\le x\\ \Omega(d)\not\in(k,2k)}}
\frac{1}{d},
$$
where $P(d)$ denotes the largest prime factor of $d>1$ (and $P(1)=1$).
Our result then followed from the bounds
$\varphi(n_x)/n_x\asymp1/\log x$ and $\log x\asymp\log\log n_x$,
 and from the estimate
 \begin{equation}\label{gooder}
 \sum_{\substack{P(d)\le x\\ \Omega(d)\not\in(k,2k)}} \frac{1}{d} 
 \ll (\log x)^{\frac{\rm e}2\log2}.
 \end{equation}

Our objective here is to improve on the estimate (\ref{gooder}) and
show that 
\begin{equation}
\label{eq-betterest}
\sum_{\substack{P(d)\le x\\\Omega(d)\not\in(k,2k)}}\frac1d
\ll\frac{(\log x)^{\frac{\rm e}2\log2}}{\sqrt{\log\log x}},
\end{equation}
from which Theorem ~\ref{thm11}  follows directly.
Towards doing  this, we prove the following lemma.
\begin{lemma}
\label{lem-recipsum}
Let $\epsilon>0$ be arbitrary but fixed.  
Let $\PP$ be a non-empty set of prime numbers and assume 
$s:=\sum_{p\in\PP}1/p<\infty$.
Let $\Nn_\PP$ denote the set of integers all of whose prime factors come from
$\PP$.  Then
$$
\frac{s^j}{j!}\le
\sum_{\substack{n\in\Nn_\PP\\\Omega(n)=j}}\frac1n\ll_\epsilon\frac{s^j}{j!}
$$
for every integer $0\le j\le(2-\epsilon)s$.
\end{lemma}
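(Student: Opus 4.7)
The plan is to work with the generating function
\[
F(x) := \sum_{n \in \Nn_\PP} \frac{x^{\Omega(n)}}{n} = \prod_{p \in \PP} \frac{1}{1-x/p},
\]
whose coefficient of $x^j$ is precisely $A_j := \sum_{\Omega(n)=j,\, n \in \Nn_\PP} 1/n$. For the lower bound $s^j/j! \le A_j$, I would expand $s^j = \bigl(\sum_{p \in \PP} 1/p\bigr)^j$ by the multinomial theorem. Each ordered tuple $(p_1, \ldots, p_j) \in \PP^j$ contributes $(p_1 \cdots p_j)^{-1}$, the product $n = p_1 \cdots p_j$ lies in $\Nn_\PP$ with $\Omega(n) = j$, and at most $j!$ ordered tuples yield the same $n$. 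Collecting tuples by value gives $s^j \le j! A_j$ at once.

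For the upper bound, the strategy is to factor out the leading exponential behavior by writing $F(x) = e^{sx} G(x)$, where
\[
G(x) := \prod_{p \in \PP} \frac{e^{-x/p}}{1 - x/p} = \exp\Big(\sum_{k \ge 2} \frac{S_k x^k}{k}\Big), \qquad S_k := \sum_{p \in \PP} p^{-k}.
\]
Since the exponent has non-negative Taylor coefficients, $G(x) = \sum_{m \ge 0} g_m x^m$ has $g_m \ge 0$ throughout. Choose $r := 2 - \epsilon/2$. Because $p_{\min} := \min \PP \ge 2$, we have $r < p_{\min}$, and a short estimate using $-\log(1 - r/p) - r/p \le (r/p)^2/\bigl(2(1 - r/p)\bigr)$ (the term $p = p_{\min} = 2$, if it occurs, being bounded separately via $1 - r/p = \epsilon/4$) shows $G(r) \le C_\epsilon$ for some constant depending only on $\epsilon$. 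Cauchy's inequality then gives $g_m \le G(r) r^{-m} \le C_\epsilon r^{-m}$.

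From $F(x) = e^{sx} G(x)$, comparing coefficients and substituting $k = j - m$ yields
\[
A_j = \sum_{m=0}^j g_m \frac{s^{j-m}}{(j-m)!} \le C_\epsilon\, r^{-j} \sum_{k=0}^j \frac{(sr)^k}{k!}.
\]
The hypothesis $j \le (2-\epsilon)s$ forces $sr/j \ge r/(2-\epsilon) = (2-\epsilon/2)/(2-\epsilon) \ge 1 + c_\epsilon$ for some $c_\epsilon > 0$, so the ratio of consecutive terms $sr/(k+1) \ge 1 + c_\epsilon$ whenever $k+1 \le j$. The partial sum is therefore dominated by its final term through a geometric estimate, $\sum_{k=0}^j (sr)^k/k! \ll_\epsilon (sr)^j/j!$, and substituting back gives $A_j \ll_\epsilon s^j/j!$, as desired.

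The main obstacle is the twin requirement on the radius $r$: it must lie strictly below $p_{\min}$ so that $G(r)$ is finite, yet strictly above $j/s$ so that the partial sum of $e^{sr}$ is controlled by its last term. The hypothesis $j \le (2-\epsilon)s$ together with $p_{\min} \ge 2$ exactly accommodates $r = 2 - \epsilon/2$, which is the sole place the dependence on $\epsilon$ enters the implicit constant.
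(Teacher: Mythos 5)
Your proof is correct but takes a genuinely different route from the paper's. The lower bound via the multinomial expansion is identical. For the upper bound, the paper argues combinatorially: it factors each $n$ as $n = m^2 u$ with $u$ squarefree, bounds the squarefree part of the sum by $s^{j-2\Omega(m)}/(j-2\Omega(m))!$, then uses $j!/(j-2\Omega(m))! \le j^{2\Omega(m)}$ and $j/s \le 2-\epsilon$ to pull out $s^j/j!$, leaving the convergent Euler product $\prod_{p\in\PP}\bigl(1-(2-\epsilon)^2/p^2\bigr)^{-1}$. You instead work with the generating function $F(x)=\prod_p(1-x/p)^{-1}$, factor $F=e^{sx}G$ so that $G$ isolates the correction coming from prime powers $p^k$, $k\ge2$, bound the nonnegative coefficients $g_m$ by Cauchy's estimate at radius $r=2-\epsilon/2$, and close with a geometric-ratio argument on the truncated exponential series. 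The numerical mechanism is the same in both cases — the hypothesis $j\le(2-\epsilon)s$ creates the margin $(2-\epsilon)/p<1$ (or $r/p<1$) needed for the $p=2$ contribution to converge — but your argument is more structured: it cleanly separates the dominant $e^{sx}$ from the perturbation $G$, and would adapt directly to sharper asymptotics or to the extension mentioned in the paper's remark, where the cutoff $2$ is replaced by $p_0=\min\PP$ (just take $r<p_0$). The paper's version is slightly more elementary and self-contained, requiring no power-series manipulations. One small thing worth spelling out in your write-up: since $g_m\ge0$, finiteness of $G(r)=\sum g_m r^m$ is both what Cauchy's bound needs and all that must be checked, so the convergence issues are genuinely harmless.
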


\begin{remark}
If the least prime in $\PP$ is $p_0$, then this  result can
be extended to the range $j\le(p_0-\epsilon)s$, with the implied constant then
depending on both $p_0$ and $\epsilon$.
\end{remark}

\begin{proof}
The lower bound is almost immediate and it holds for all $j$.  Indeed,
expanding $s^j$ by the multinomial theorem, each term is of the form
$b_n/n$ where $b_n$ is a multinomial coefficient, $n\in\Nn_\PP$, and
$\Omega(n)=j$.  Since $b_n\le j!$, the lower bound follows.  We note
that this argument also shows that $s^j/j!$ stands as an upper bound for the sum over
{\em squarefree} $n$ in the lemma.

For $n\in\Nn_\PP$, write $n=m^2u$, where $u$ is squarefree.
Since $\Omega(m^2)=2\Omega(m)$,
we have by the observation above about squarefree numbers,
$$
W_j:=\sum_{\substack{n\in\Nn_\PP\\\Omega(n)=j}}\frac1n\le
\sum_{\substack{m\in\Nn_\PP\\\Omega(m)\le j/2}}\frac1{m^2}\cdot
\frac{s^{j-2\Omega(m)}}{(j-2\Omega(m))!}.
$$
This, together with $j!/(j - 2 \Omega(m))! \leq j^{2 \Omega(m)}$,
gives that
\begin{align*}
W_j&\le\frac{s^j}{j!}\sum_{\substack{m\in\Nn_\PP\\\Omega(m)\le j/2}}\frac1{m^2}
\cdot\frac{j!}{(j-2\Omega(m))!}s^{-2\Omega(m)}
\le \frac{s^j}{j!}\sum_{\substack{m\in\Nn_\PP\\\Omega(m)\le j/2}}\frac1{m^2}
\cdot\left(\frac{j}{s}\right)^{2\Omega(m)}\\
&\le \frac{s^j}{j!}\sum_{\substack{m\in\Nn_\PP\\\Omega(m)\le j/2}}
\frac{(2-\epsilon)^{2\Omega(m)}}{m^2}
\leq\frac{s^j}{j!}\prod_{p\in\PP}
\left(1+  \sum_{i=1}^{\infty} \frac{(2-\epsilon)^{2i}}{p^{2i}}\right)\\
&=\frac{s^j}{j!}\prod_{p\in\PP}
\left(1-\left(\frac{2-\epsilon}{p}\right)^{2}\right)^{-1}
\ll_\epsilon\frac{s^j}{j!},
\end{align*}
and the proof of the lemma is complete.
\end{proof}

We now prove \eqref{eq-betterest}, which as we have seen, is sufficient
for the proof of Theorem~\ref{thm11}.  We may assume that
$x$ is large.  Let $\PP$ be the set of
primes in $[1,x]$, so that $\Nn_\PP$ consists of the integers $n$ with
$P(n)\le x$ and $s=\log\log x+O(1)$.  Let $a_j=s^j/j!$
and note that if $j<k$, then $a_j/a_{j+1}$ is bounded below 1.  Thus,
by Lemma~\ref{lem-recipsum} and Lemma~\ref{lem-estimates}, we have
$$
\sum_{\substack{P(n)\le x\\\Omega(n)\le k}}\frac1n
\ll\sum_{j\le k}a_j
\ll a_k\ll \frac{(\log x)^{\frac{\rm e}2\log2}}{\sqrt{\log\log x}}.
$$
Similarly, $a_{j+1}/a_j$ is bounded below 1 when $j\ge2k$, so that by
Lemma~\ref{lem-recipsum} applied for $2k\le j\le 2.5k$ ($<1.7\log\log x$), 
$$
\sum_{\substack{P(n)\le x\\2k\le\Omega(n)\le2.5k}}\frac1n
\ll\sum_{2k\le j\le2.5k}a_j\ll a_{2k}
\ll \frac{(\log x)^{\frac{\rm e}2\log2}}{\sqrt{\log\log x}},
$$
the last inequality holding as in the third relation in Lemma~\ref{lem-estimates}.
It remains to consider those $n$ with $\Omega(n)>2.5k$.  
Using~\cite[Corollary 2.5]{KLP11}, we have that
$$
\sum_{\substack{P(n)\le x\\ \Omega(n)\ge(2.5\e/4)\log\log x}}\frac1n
\ll(\log x)^{-(2.5\e/4)\log(2.5/4)}<(\log x)^{0.8},
$$
which is negligible.
This then proves~\eqref{eq-betterest} and Theorem~\ref{thm11}.

\section*{Acknowledgments}
Part of this work was done while the three authors visited MSRI,
as part of the semester program ``Arithmetic Statistics.''
They thank MSRI for support, funded through the NSF.
The first author was supported in part by grants from
the G\"oran Gustafsson Foundation, and the Swedish Research Council.
The second author was supported in
part by NSF grants DMS-0801029 and DMS-1101373.  The third author was supported in
part by NSF grant DMS-1001180.


\end{document}